\numberwithin{equation}{section}
\newtheorem{theorem}{Theorem}[section]
\newtheorem{lemma}[theorem]{Lemma}
\newtheorem{proposition}[theorem]{Proposition}
\newtheorem{Definition}{Definition}[section]
\newenvironment{definition}{\begin{Definition}\rm}{\end{Definition}}
\newtheorem{Rem}{Remark}[section]
\newcommand\R{\mathbb{R}}
\newcommand\N{\mathbb{N}}
\newcommand\be{\begin{equation}}
\newcommand\ee{\end{equation}}
\newcommand\bea{\begin{eqnarray}}
\newcommand\eea{\end{eqnarray}}
\newcommand\beaa{\begin{eqnarray*}}
\newcommand\eeaa{\end{eqnarray*}}
\newcommand\bss{\begin{cases}}
\newcommand\ess{\end{cases}}
\newcommand\bR{{\Bbb R}}
\newcommand{\ep}{\varepsilon}
\newcommand{\e}{\ep}
\newcommand{\ld}{\lambda}
\newcommand\bN{\mathcal{N}}
\newcommand\upsi{{\underline{\psi}}}
\newcommand\uphi{{\underline{\phi}}}
\newcommand\ophi{{\overline{\phi}}}
\newcommand\opsi{{\overline{\psi}}}
\newcommand\mU{\mathcal{U}}
\newcommand\mL{\mathcal{L}}
\newcommand\elz{e^{-\ld_1 z}}
\begin{document}

\title[Singular predator-prey model with nonlocal dispersal]{Traveling wave solutions for a singular diffusive prey-predator model with nonlocal dispersal}

\author[J.-S. Guo]{Jong-Shenq Guo}
\address{Department of Applied Mathematics and Data Science, Tamkang University, Tamsui, New Taipei City 251301, Taiwan}
\email{jsguo@mail.tku.edu.tw}

\author[F. Hamel]{Fran{\c{c}}ois Hamel}
\address{Aix Marseille Univ, CNRS, I2M, Marseille, France}
\email{francois.hamel@univ-amu.fr}

\author[C.-C. Wu]{Chin-Chin Wu}
\address{Department of Applied Mathematics, National Chung Hsing University, Taichung 402, Taiwan}
\email{chin@email.nchu.edu.tw}



\thanks{{\sl Keywords:} Prey-predator system, nonlocal dispersal, traveling wave, minimal speed.}

\thanks{{\sl 2000 Mathematics Subject Classification:} Primary: 35K55, 35K57; Secondary: 92D25, 92D40.}

\thanks{This work is partially supported by the National Science and Technology Council of Taiwan under the
grants 114-2115-M-032-005 (JSG) and 114-2115-M-005-004 (CCW), 
and by the French Agence Nationale de la Recherche (ANR) in the framework of the ReaCh project (ANR-23-CE40-0023-02).}

\maketitle

\begin{center}
{\it To Professor Yoshihisa Morita, with admiration to an esteemed mathematician}
\end{center}

\begin{abstract}
We study a singular diffusive prey-predator system with nonlocal dispersal for which the carrying capacity of the predator is proportional to the density of prey. We show the existence of positive one-dimensional traveling waves connecting the predator-free state and the constant co-existence state. The set of admissible wave speeds is proved to be equal to the semi-infinite interval $[s^*,\infty)$, for some $s^*>0$ which is characterized by a variational formula. 
\end{abstract}


\section{Introduction}
\setcounter{equation}{0}

We consider the following diffusive prey-predator model with nonlocal dispersal
\begin{equation}\label{pp}
\begin{cases}
U_t(x,t)= \bN_1[U(\cdot,t)](x)+[aU(1-U)-V](x,t), \; x\in\bR,\, t>0,\\
V_t(x,t)=d \bN_2[V(\cdot,t)](x)+[bV(1-V/U)](x,t), \; x\in\bR,\, t>0,
\end{cases}
\end{equation}
where the unknown functions $U,V$ stand for the population densities of prey and predator species at position $x$ and time $t$, respectively, $d,a,b$ are positive constants such that $1,d$ are diffusion coefficients and $a,b$ are intrinsic growth rates of species $U,V$, respectively. The functional response of predator to prey is normalized to be $1$. The prey obeys the logistic growth and its carrying capacity is normalized to be 1. However, the density of predator follows a logistic dynamics with a varying carrying capacity proportional to the density of prey. Moreover, for $i=1,2$, $\bN_i$ formulates the spatial nonlocal dispersal of individuals and is defined by
\beaa
\bN_i[u(\cdot,t)](x):=\int_{\mathbb{R}}J_i(y)u(x-y,t)dy-u(x,t),\; u=U,V,
\eeaa
where $J_i$ is a probability density function satisfying the following conditions: 
\begin{enumerate}
\item[(H1)]\,  $J_i$ is a nonnegative continuous function defined in $\bR$;
\item[(H2)]\, $\int_{\mathbb{R}}J_i(y)dy=1$ and $J_i(y)=J_i(-y)$ for all $y\in \mathbb{R}$;
\item[(H3)] there exists $\hat{\lambda}_i\in(0,\infty]$ such that  $\int_{\mathbb{R}}J_i(y)e^{\lambda y} dy < \infty$ for any $\lambda \in (0, \hat{\lambda_i})$ and $\int_{\mathbb{R}}J_i(y)e^{\lambda y} dy\to\infty$ as $\lambda \uparrow \hat{\lambda}_i$.
\end{enumerate}
Note that there is always the predator-free state $(1,0)$, and there is the unique constant co-existence state $(a^*,a^*)$, $a^*:=1-1/a\in(0,1)$, when $a>1$.

When the nonlocal dispersal in system \eqref{pp} is replaced by the classical diffusion, the dynamical behaviors of the corresponding system was investigated in the survey paper \cite{CGS21}. In fact, system \eqref{pp} without diffusion (the ODE system) arises in the control of introduced rabbits to protect native birds from introduced cat predation in an island (cf. \cite{CLS}), when we consider the case without rabbits and control. For the detailed biological background of the full ODE system including the rabbits and control, we refer the reader to \cite{CLS,CS}. However, it is reasonable and more realistic to take into account the influence of spatial movements of birds and cats, namely, the effect of diffusion. On the other hand, to model the long range movements and nonadjacent interactions of individuals it is more realistic to consider the nonlocal dispersal instead of the random movement with classical diffusion. This motivates us to study system \eqref{pp}.

The main purpose of this paper is to study the existence and nonexistence of traveling wave solutions to \eqref{pp} connecting the predator-free state and the co-existence state. Here a solution $(U,V)$ to \eqref{pp} is called a traveling wave solution of \eqref{pp}, if there exist a constant $s\in\R$ ({\it the wave speed}) and a function $(\phi,\psi)$ ({\it the wave profile}) of class $C^1(\R)$ such that
$$(U,V)(x,t)=(\phi,\psi)( z),\ \ z:=x-st.$$
We are interested in the traveling waves connecting the predator-free state $(1,0)$ to the co-existence state $(a^*,a^*)$. Therefore, $\{s,\phi,\psi\}$ satisfies the following system of equations:
\begin{equation}\label{TWS}
\left\{
\begin{aligned}
&   \bN_1[\phi](z)+ s\phi'(z) +a\phi(z)\left[1-\phi(z)\right]-\psi(z)    =0, &\; z\in \mathbb{R},\\
&  d \bN_2[\psi](z)+ s\psi'(z) +b\psi(z)\left[1-\frac{\psi(z)}{\phi(z)}\right] =0, & \; z\in \mathbb{R},
\end{aligned}
\right.
\end{equation}
together with the following asymptotic boundary conditions
\begin{equation}\label{BC}
(\phi,\psi)(\infty) = (1,0),\quad (\phi,\psi)(-\infty) = (a^{*},a^{*}).
\end{equation}

For convenience, for $i=1,2$, we set
\beaa
I_i(\lambda):=\int_\bR J_i(y)e^{\lambda y}dy,\ \ \lambda\in\R,
\eeaa
with $I_i(\lambda)\in(0,\infty)$ if $0\le\lambda<\hat{\lambda}_i$ and $I_i(\lambda)=\infty$ if $\lambda\ge\hat{\lambda}_i$. Note that, due to the symmetry of $J_i$, the function $I_i$ is even, and it is also strictly convex in $(-\hat{\lambda}_i,\hat{\lambda}_i)$. Also, we introduce the quantity
\beaa
s^*:=\inf_{\lambda\in(0,\hat{\ld}_2)}\frac{d[I_2(\lambda)-1]+b}{\lambda}.
\eeaa
Note that $s^*$ is well-defined, the infimum is reached, and $s^*>0$.

We now state our main theorem as follows.

\begin{theorem}\label{th:main}
Let $a$, $b$ and $d$ be given positive constants such that $a\ge 4$ and $d<b$. Then there exists a positive solution $(\phi,\psi)$ of \eqref{TWS}-\eqref{BC} for each $s > s^{*}$. This existence also holds for $s=s^*$, if we further assume that $J_2$ has a compact support. Moreover, there exist no positive solutions of \eqref{TWS}-\eqref{BC} for $s < s^{*}$.
\end{theorem}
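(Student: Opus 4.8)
The plan is to treat the three assertions separately. For the nonexistence statement $s<s^*$, I would argue by contradiction: suppose a positive solution $(\phi,\psi)$ exists for some $s<s^*$. The key is the predator equation $d\bN_2[\psi]+s\psi'+b\psi(1-\psi/\phi)=0$ together with the boundary behavior $\psi(+\infty)=0$. Near $z=+\infty$ the equation linearizes (since $\phi\to1$) to $d\bN_2[\psi]+s\psi'+b\psi\approx0$, whose characteristic equation is $d[I_2(\lambda)-1]+b-s\lambda=0$. By the definition of $s^*$ as the infimum of $\lambda\mapsto(d[I_2(\lambda)-1]+b)/\lambda$ over $(0,\hat\lambda_2)$, when $s<s^*$ this characteristic equation has no real root $\lambda>0$; since $d[I_2(\lambda)-1]+b-s\lambda>0$ for all $\lambda\in(0,\hat\lambda_2)$ and the expression also stays positive for $\lambda\le0$, a positive solution decaying to $0$ cannot exist. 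Making this rigorous requires a comparison/sliding argument or a Laplace-transform / two-sided exponential estimate showing $\psi$ must decay exponentially with some admissible rate, and then deriving the contradiction from the sign of the characteristic function; one standard route is to show $\psi$ is bounded below by a positive constant times an exponential using the positivity of $\phi$ near $+\infty$, contradicting integrability or the boundary condition.

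For the existence part with $s>s^*$, I would construct suitable ordered super- and sub-solutions and invoke a fixed-point argument. Writing the system in the form $\phi=\mathcal N_1[\phi,\psi]$, $\psi=\mathcal N_2[\phi,\psi]$ after adding a large constant $M$ to both sides (so that the nonlinear terms $-\bN_i+M\cdot$ become order-preserving integral operators), one defines an order interval $[\uphi,\ophi]\times[\upsi,\opsi]$ in a weighted function space. The natural choices are $\ophi\equiv1$ (or $1$ minus a small correction), $\uphi=\max\{a^*, 1-qe^{-\alpha z}\}$ type barriers, $\opsi=\min\{Ke^{-\lambda_1 z}, a^*+\cdots\}$ where $\lambda_1<\lambda_2$ are the two positive roots of $d[I_2(\lambda)-1]+b-s\lambda=0$ (which exist precisely because $s>s^*$), and $\upsi=\max\{e^{-\lambda_1 z}-re^{-(\lambda_1+\eta)z},0\}$. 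One checks these are genuine super/sub-solutions of \eqref{TWS}; the conditions $a\ge4$ and $d<b$ enter here to guarantee the correct ordering of decay rates and the sign conditions in the coupling terms (in particular $a\ge4$ is the classical bound forcing $a^*=1-1/a\ge3/4$ and controlling the quadratic term $a\phi(1-\phi)-\psi$, while $d<b$ ensures the predator's dispersal does not outpace its reaction, keeping $s^*$ governed by $J_2$ and the sub/supersolution hierarchy consistent). Then a monotone iteration or Schauder fixed-point theorem on $[-n,n]$ followed by a diagonal extraction over $n\to\infty$ yields a solution $(\phi,\psi)$ with $0<\phi\le1$, $0<\psi$, satisfying the squeeze between barriers; the boundary condition at $+\infty$ follows from the exponential bounds, and the condition at $-\infty$ follows from a separate argument showing any bounded nonnegative solution trapped between the barriers must converge to the unique co-existence state $(a^*,a^*)$ (e.g. via monotonicity in a moving frame, a Lyapunov-type functional, or a squeezing/limiting-equation argument ruling out other $\omega$-limit points).

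For the borderline case $s=s^*$ under the extra hypothesis that $J_2$ has compact support, the above construction degenerates because the two roots $\lambda_1,\lambda_2$ coalesce into a double root $\lambda^*$. The standard fix is to take a sequence $s_n\downarrow s^*$, obtain solutions $(\phi_n,\psi_n)$ from the previous step, normalize them (e.g. by a translation fixing $\psi_n$ at some prescribed small value at $z=0$, or fixing $\phi_n(0)$), and pass to the limit using uniform Lipschitz bounds coming from the equations plus the compact-support assumption on $J_2$ (which makes the nonlocal terms behave like bounded local averages and supplies the needed compactness and tightness preventing the profile from sliding off to infinity or degenerating to a constant). One then verifies the limit is a genuine positive solution with the correct boundary conditions, the delicate point being to rule out the trivial limits $(1,0)$ and $(a^*,a^*)$ — this is where the normalization and a lower bound on $\psi_n$ uniform in $n$ (again using compact support and a Harnack-type inequality for the nonlocal operator) are essential.

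The main obstacle I anticipate is the construction and verification of the sub-solution for the predator component $\upsi$ together with establishing the left boundary condition $(\phi,\psi)(-\infty)=(a^*,a^*)$. The predator sub-solution is delicate because $\psi$ must stay strictly positive while the term $b\psi(1-\psi/\phi)$ changes sign, and the singular denominator $\phi$ requires an a priori lower bound on $\phi$ that is not obvious near $-\infty$ before one knows the limit; circularity there must be broken carefully, likely by first proving $\liminf_{z\to-\infty}\phi(z)>0$ via the prey equation alone. Establishing convergence at $-\infty$ to exactly $(a^*,a^*)$ — rather than oscillation or convergence to the boundary of the feasible region — is the other genuinely hard step, and is presumably where $a\ge4$ is used decisively, since that bound is what is known to yield the needed monotonicity or contraction structure for this class of prey-predator systems.
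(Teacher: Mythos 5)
Your overall strategy for $s>s^*$ (explicit upper/lower solutions with decay rates tied to the two roots $\lambda_1<\lambda_2$ of $d[I_2(\lambda)-1]+b-s\lambda=0$, then a fixed-point argument) is essentially the paper's route, and your guesses for the barrier shapes, including $\underline{\psi}=e^{-\lambda_1 z}-re^{-(\lambda_1+\eta)z}$, match the actual construction. However, there are genuine gaps in the other parts. First, for $s=s^*$ your plan is to take $s_n\downarrow s^*$ and pass to the limit after a normalization, with the trivial limits $(1,0)$ and $(a^*,a^*)$ ruled out by ``a lower bound on $\psi_n$ uniform in $n$ (again using compact support and a Harnack-type inequality).'' This is exactly the step that does not come for free: the system is non-monotone, has no comparison principle, and the only positive lower bound available from the $s_n>s^*$ construction degenerates as $s_n\downarrow s^*$ (the two roots coalesce, so the exponent ratio $\mu_n\to1$, the constant $q_n\to\infty$, hence the plateau value $\delta_n\to0$ and the plateau region escapes to $+\infty$); no Harnack-type inequality for this singular coupled system is stated or known here. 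Moreover, the role you assign to the compact support of $J_2$ (``compactness and tightness'' in the limit) is not how that hypothesis is actually needed: the paper avoids the limiting argument altogether and builds upper/lower solutions directly at $s=s^*$ with profiles $hze^{-\lambda_1 z}$ and $(hz-q\sqrt z)e^{-\lambda_1 z}$ adapted to the double root, and compact support is used pointwise in verifying these inequalities (to replace $\overline{\psi}(z-y)$ by the extended exponential profile for $|y|\le S$, and to produce the second-moment term $\int J_2(y)y^2e^{\lambda_1 y}dy$ that dominates the $\sqrt z$ correction). Without either that direct construction or a worked-out uniform non-degeneracy estimate, your $s=s^*$ case is not a proof.

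Second, the two remaining steps are only gestured at. The left limit $(\phi,\psi)(-\infty)=(a^*,a^*)$ is listed as achievable ``via monotonicity in a moving frame, a Lyapunov-type functional, or a squeezing argument,'' but no such argument is given; monotone methods are unavailable for this non-monotone system, and the actual proof is a specific squeezing scheme: from the equations one derives $\phi^-\le\psi^-\le\psi^+\le\phi^+$, $a\phi^-(1-\phi^-)\le\psi^+$ and $a\phi^+(1-\phi^+)\ge\psi^-$ for the limsup/liminf at $-\infty$, and then a one-parameter family of nested intervals $[l(\nu),r(\nu)]$ shrinking to $\{a^*\}$ is pushed to $\nu=1$ by a contradiction argument in which $a\ge4$ enters quantitatively; this cannot be left as ``presumably where $a\ge4$ is used.'' Third, your nonexistence argument for $s<s^*$ stops at the observation that the characteristic function has no positive root and concedes that making it rigorous ``requires a comparison/sliding argument or a Laplace-transform estimate''; the clean rigorous route (and the paper's) is different: since $\phi\ge\epsilon>0$ on $\R$, the function $V(x,t)=\psi(x-s_0t)$ is a supersolution of the nonlocal logistic equation with reaction $bV(1-V/\epsilon)$, whose spreading speed is exactly $s^*$, so evaluating along the ray $x=\hat st$ with $s_0<\hat s<s^*$ gives $\liminf_{t\to\infty}\psi((\hat s-s_0)t)>0$, contradicting $\psi(+\infty)=0$. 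As written, your nonexistence section identifies the right heuristic but does not contain a proof.
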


Hereafter, a function $(\phi,\psi)$ is positive if $\phi,\psi>0$ in $\bR$. A corresponding theorem to Theorem~\ref{th:main}, under the condition $a\ge 4$, on the traveling waves in the classical diffusion case was derived in \cite{CGS21}. Theorem~\ref{th:main} characterizes the minimal speed $s^*$ in the nonlocal dispersal case, but it needs the extra condition $d<b$ in the construction of lower and upper solutions to~\eqref{TWS}.

It is worth to note that there are some difficulties in the study of system \eqref{pp}. It is a non-monotone system which is lack of comparison principle and a singularity may occur in the $V$-equation when $U$ reaches zero in a finite time which has been confirmed numerically in \cite{CLS}. Thus system \eqref{pp} is named as a {\it singular} prey-predator model due to the latter fact. This is in contrast to the system with $U$-equation in \eqref{pp} replaced by
\beaa
U_t(x,t)= \bN_1[U(\cdot,t)](x)+[aU(1-U)-kUV](x,t), \; x\in\bR,\, t>0,
\eeaa
in which the functional response of predation is a linear function of prey. In this case, the strong maximum principle for the scalar equation gives that $U>0$ for all $t>0$. Hence no singularity can occur for the nonlinear term $V/U$ in the $V$-equation.

The existence of wave profiles $(\phi,\psi)$ to \eqref{TWS}-\eqref{BC} for all speeds $s>s^*$ is based on the construction of lower and upper solutions, which is new for this singular nonlocal system. The case $s=s^*$ is more involved, it requires a special care and uses the boundedness of the support of $J_2$. The characterization of the limiting state behind the front, that is, $(\phi,\psi)(-\infty)=(a^*,a^*)$ is carried out with a squeezing method and an argument by contradiction. The proof of the nonexistence of wave profiles for all speeds $s<s^*$ is also done by contradiction. Section~\ref{sec2} is devoted to the existence part in Theorem~\ref{th:main}, while the nonexistence is shown in Section~\ref{sec3}.


\section{Existence of traveling waves}\label{sec2}

This section is devoted to the derivation of the existence of positive traveling waves, under the assumptions $a\ge 4$ and $d<b$. First, we give the definition of upper and lower solutions as follows.

\begin{definition}\label{low-up}
Positive continuous functions $(\overline{\phi },\overline{\psi })$ and $(\underline{\phi},\underline{\psi})$ are called a pair of upper and lower solutions of \eqref{TWS} if $\underline{\phi}( z)\le \overline{\phi}( z) $, $\underline{\psi}( z)\leq\overline{\psi}( z)$ for all $ z \in \mathbb{R}$ and the following inequalities
\begin{eqnarray}
&&\mU_1(z):=\mathcal{N}_1[\overline{\phi }]( z )+s\overline{\phi }^{\prime }( z)+a\ophi(z)[1-\overline{\phi }( z )]-\underline{\psi }( z )\le 0,  \label{u1} \\
&&\mU_2(z):=d \mathcal{N}_2[\overline{\psi }]( z )+s\overline{\psi }^{\prime }( z)+b\overline{\psi }( z )[1-\overline{\psi }( z )/\overline{\phi }( z )]\le 0,  \label{u2} \\
&&\mL_1(z):= \mathcal{N}_1[\underline{\phi }]( z )+s\underline{\phi }^{\prime }( z)+ a\uphi(z)[1-\underline{\phi }( z )]-\overline{\psi }( z )\ge 0,  \label{l1} \\
&&\mL_2(z):= d\mathcal{N}_2[\underline{\psi }]( z )+s\underline{\psi }^{\prime }( z)+ b\underline{\psi }( z )[1-\underline{\psi }( z )/\underline{\phi }( z )]\ge 0  \label{l2}
\end{eqnarray}%
hold for all $z\in\mathbb{R}\setminus E$, where $E$ is some finite subset of $\mathbb{R}$.
\end{definition}

The following result can be proved by applying Schauder's fixed point theorem (cf., e.g., \cite{CGY17,DGLP19,GHW} and references cited therein). We safely omit its proof.

\begin{proposition}\label{le6}
Let $s>0$ be given. Let $(\overline{\phi },\overline{\psi })$ and $(\underline{\phi},\underline{\psi})$ be a pair of bounded upper and lower solutions of \eqref {TWS}. Then system \eqref {TWS} admits a positive solution $(\phi,\psi)$ of class $C^1(\R)$ such that
\begin{equation*}
\underline{\phi }( z )\leq \phi(z)\leq \overline{\phi}( z ),\; \upsi(z)\leq \psi(z)\leq\overline{\psi}( z ),\; z \in \mathbb{R}.
\end{equation*}
\end{proposition}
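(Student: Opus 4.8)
The plan is to recast \eqref{TWS} as a fixed-point problem for a compact operator on the order interval cut out by the sub- and super-solutions, and to apply Schauder's theorem. Fix $s>0$ and set $m:=\inf_{\R}\underline\phi$, $M:=\sup_{\R}\overline\phi$, $M':=\sup_{\R}\overline\psi$; here one uses that the lower solutions occurring in the sequel have $m>0$, so that the ratio $\psi/\phi$ stays bounded along the iteration. Choose constants $\beta_1>\max\{0,\,2aM-a+1\}$ and $\beta_2>\max\{0,\,2bM'/m-b+d\}$, so that $t\mapsto at(1-t)+\beta_1 t$ is nondecreasing on $[0,M]$ and, for $\phi\in[m,M]$, $\psi\mapsto b\psi(1-\psi/\phi)+\beta_2\psi$ is nondecreasing on $[0,M']$. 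For $(\phi,\psi)$ in
\[
\Gamma:=\bigl\{(\phi,\psi)\in C(\R)^2:\ \underline\phi\le\phi\le\overline\phi,\ \underline\psi\le\psi\le\overline\psi\ \text{on}\ \R\bigr\},
\]
define $H_1[\phi,\psi]:=\mathcal{N}_1[\phi]+a\phi(1-\phi)-\psi+\beta_1\phi$ and $H_2[\phi,\psi]:=d\,\mathcal{N}_2[\psi]+b\psi(1-\psi/\phi)+\beta_2\psi$, together with the operator $F=(F_1,F_2)$,
\[
F_i[\phi,\psi](z):=\frac1s\int_z^{\infty}e^{\beta_i(z-y)/s}\,H_i[\phi,\psi](y)\,dy,\qquad i=1,2.
\]
Differentiating under the integral sign shows that a bounded $(\phi,\psi)$ is a fixed point of $F$ if and only if it is a bounded solution of \eqref{TWS} of class $C^1(\R)$, so it suffices to find a fixed point of $F$ in $\Gamma$.

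The first substantive step I would carry out is to verify $F(\Gamma)\subseteq\Gamma$. By the choice of $\beta_1,\beta_2$, $H_1$ is nondecreasing in $\phi$ and nonincreasing in $\psi$, while $H_2$ is nondecreasing in both arguments; since the kernels in $F_i$ are positive, $F_1$ and $F_2$ inherit these monotonicities. Adding $\beta_1\overline\phi$ to \eqref{u1}, multiplying by $s^{-1}e^{-\beta_1 z/s}$, and integrating from $z$ to $+\infty$ (the boundary term at $+\infty$ vanishes since $\overline\phi$ is bounded and $\beta_1,s>0$, and the integrated inequality is unaffected by the finite set $E$ because $\overline\phi$ is continuous) gives $\overline\phi\ge F_1[\overline\phi,\underline\psi]$; similarly \eqref{l1}, \eqref{u2} and \eqref{l2} give $\underline\phi\le F_1[\underline\phi,\overline\psi]$, $\overline\psi\ge F_2[\overline\phi,\overline\psi]$ and $\underline\psi\le F_2[\underline\phi,\underline\psi]$. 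Combining these four estimates with the monotonicity of $F$, for every $(\phi,\psi)\in\Gamma$ one obtains $\underline\phi\le F_1[\underline\phi,\overline\psi]\le F_1[\phi,\psi]\le F_1[\overline\phi,\underline\psi]\le\overline\phi$ and $\underline\psi\le F_2[\underline\phi,\underline\psi]\le F_2[\phi,\psi]\le F_2[\overline\phi,\overline\psi]\le\overline\psi$, that is, $F[\phi,\psi]\in\Gamma$. (The inequalities in Definition~\ref{low-up} are tailored precisely so that these four bounds hold.)

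Next I would fix the topology and invoke Schauder. Equip $C(\R)^2$ with the weighted norm $\|(\phi,\psi)\|_\mu:=\sup_{z\in\R}e^{-\mu|z|}\bigl(|\phi(z)|+|\psi(z)|\bigr)$ for a small $\mu>0$; then $\Gamma$ is a nonempty, closed, bounded, convex subset of the associated Banach space. From the identity $s\phi'=\beta_1\phi-H_1[\phi,\psi]$ (and its analogue for $\psi$) and the uniform bounds on $\Gamma$, the elements of $F(\Gamma)$ have uniformly bounded $C^1$-norm, hence form an equicontinuous, pointwise bounded family; together with the decay provided by $e^{-\mu|z|}$ at $\pm\infty$ this makes $F(\Gamma)$ relatively compact in $(C(\R)^2,\|\cdot\|_\mu)$ by Arzel\`a--Ascoli and a routine tail estimate. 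Continuity of $F$ on $\Gamma$ in this norm follows from dominated convergence applied to the convolutions $\mathcal{N}_i$ and to the integrals defining $F_i$. Schauder's theorem then yields $(\phi,\psi)\in\Gamma$ with $F[\phi,\psi]=(\phi,\psi)$, which by the equivalence above is a $C^1(\R)$ solution of \eqref{TWS} satisfying $\underline\phi\le\phi\le\overline\phi$, $\underline\psi\le\psi\le\overline\psi$, and it is positive since $\underline\phi,\underline\psi>0$.

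Most of this is routine — the weighted norm is the standard device that overcomes the non-compactness of $\R$, and the differentiation identity linking fixed points with $C^1$ solutions is elementary. The only features genuinely specific to this singular nonlocal system that need care, and which I expect to be the main (though mild) obstacle, are: the singular nonlinearity $\psi/\phi$, which stays bounded throughout the iteration and keeps $H_2$ monotone only because $\inf_\R\underline\phi>0$ (a property one must ensure holds for the lower solutions built in the proof of Theorem~\ref{th:main}); and the finite exceptional set $E$, handled by the fact that a continuous function whose derivative satisfies a differential inequality off $E$ still satisfies the integrated inequality.
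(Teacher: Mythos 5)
Your argument is correct and is exactly the standard Schauder fixed-point construction (integral reformulation with large shift constants $\beta_i$, monotonicity of the resulting operator, invariance of the order interval determined by the upper/lower solutions, and compactness in a weighted sup-norm) that the paper invokes by citation to \cite{CGY17,DGLP19,GHW} rather than proving. The only hypothesis you use beyond the literal statement --- $\inf_{\R}\underline{\phi}>0$, needed to control the singular ratio $\psi/\phi$ and to choose a finite $\beta_2$ --- holds for both pairs of lower solutions constructed in the paper (where $\underline{\phi}\ge 1/2$), so the proposition is established in the form in which it is actually applied.
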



\subsection{Upper and lower solutions}

To derive the existence of traveling waves, we need to find some suitable pairs of upper and lower solutions of \eqref{TWS}. For this, we divide the analysis into two cases: $s>s^{*}$ and $s=s^{*}$. The main idea of the following construction is motivated by \cite{CGS21,DGLP19}.

\subsubsection{The case $s>s^{*}$}

For a given $s>s^*$, we consider the quantity
\beaa
A(\lambda)=A(\lambda;s):=d[I_2(\lambda)-1]-s\lambda+b,\; \lambda>0.
\eeaa
It follows from the definition of $s^*$ and the strict convexity of $A(\lambda)$ that there are two positive constants $\lambda_1<\lambda_2<\hat{\ld}_2$ such that
$$A(\lambda_1)=A(\ld_2)=0$$
and $A(\lambda)<0$ for all $\lambda\in(\lambda_1,\lambda_2)$. Also, set 
$$B(\ld)=B(\ld;s):=[I_1(\ld)-1]-s\ld.$$ 
Since $B(0)=0$ and $B'(0)=-s<0$, we can choose a constant $\ld_0\in(0,\min\{\ld_1,\hat{\ld}_1\})$ small enough such that
$$B(\ld_0)<0.$$
Now, for a fixed constant $\mu\in(1,\min{\{ \lambda_2/ \lambda_1,2\}})$, we choose 
\be\label{q0}
q>\max\Big\{1,\frac{2b}{-A(\mu\ld_1)}\Big\}.
\ee
Then, the function $f(z) := e^{ -\lambda_1z}-qe^{-\mu \lambda_1z}$ has exactly one zero $z_0>0$ and exactly one maximum point $z_M\in(z_0,\infty)$, and there holds $f(z_M)>0$. Thus, using $a\ge4>1$ and $d<b$, we can choose $\delta$ such that
$$0<\delta<\min\Big\{f(z_M),\underbrace{1-\frac{1}{a}}_{=a^*},\frac12\Big(1-\frac{d}{b}\Big)\Big\},$$
and a point $z_1\in(z_0,z_M)$ such that $f(z_1)=\delta$. Note that
\be\label{delta}
d<b(1-2\delta).
\ee
Furthermore, the inequalities $z_1>z_0>0$, $\ld_1>0$ and $\mu>1$ imply
\be\label{q}
{e^{(\mu-1)\ld_1z_1}>e^{(\mu-1)\ld_1z_0}=q.}
\ee
With these choices of $\mu,q,\delta$, we finally choose $\e$ such that
\be\label{ep1}      
0<\e<\min{\left\{\frac{\delta}{1+s\ld_1+a},\frac{e^{(\mu-1)\lambda_1z_1}-q}{(1+s\lambda_1+a)e^{(\mu-1)\lambda_1z_1}} \right\}}.
\ee
Note that the constant $\e$ is admissible, due to \eqref{q}, and that $0<\e<\delta<1/2$.

Then we introduce the following bounded positive continuous functions
\begin{equation}\label{uplosol}
 \begin{aligned}
  \overline{\phi}(z) &=\left\{
                           \begin{array}{ll}
                             1-\e,                            & \hbox{$z\leq 0$,} \\
                             1-\e e^{ -\lambda_1z}, & \hbox{$z>0$,}
                           \end{array}
                         \right.\quad
  \underline{\phi}(z) =\left\{
                           \begin{array}{ll}
                             \frac{1}{2},                           & \hbox{$z\le 0$,} \\
                             1-\frac{1}{2}e^{ -\lambda_0z}, & \hbox{$z>0$,}
                           \end{array}
                         \right.\\
  \overline{\psi}(z) &=\left\{
                           \begin{array}{ll}
                             1,                            & \hbox{$z\leq 0$,} \\
                             e^{ -\lambda_1z},                  & \hbox{$z>0$,}
                           \end{array}
                         \right.\quad
  \underline{\psi}(z) =\left\{
                           \begin{array}{ll}
                             \delta,                      & \hbox{$z\leq z_1$,} \\
                             e^{ -\lambda_1z}-qe^{-\mu \lambda_1z},      & \hbox{$z>z_1$.}
                           \end{array}
                         \right.
 \end{aligned}
\end{equation}

\begin{lemma}\label{la:upper-lower1}
Assume that $a\ge 4$ and $d<b$. For $s>s^{*}$, the functions $(\overline{\phi},\overline{\psi})$ and $(\underline{\phi},\underline{\psi})$ defined in \eqref{uplosol} are a pair of upper and lower solutions of \eqref{TWS}.
\end{lemma}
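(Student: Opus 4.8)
The plan is to verify each of the four differential inequalities \eqref{u1}--\eqref{l2} directly, by plugging the explicit piecewise functions from \eqref{uplosol} into the operators $\mU_1,\mU_2,\mL_1,\mL_2$ and checking the sign region by region, with the exceptional set $E$ consisting of the finitely many breakpoints $\{0,z_1\}$ (and $0$ again for $\underline\phi$). The key structural facts that make everything work are: (i) the exponential ansatz $e^{-\lambda z}$ turns $\mathcal N_i[e^{-\lambda\cdot}](z)$ into $(I_i(\lambda)-1)e^{-\lambda z}$, so on the ``tail'' pieces the nonlocal operator is explicit; (ii) on the tails the functions $A(\lambda)$ and $B(\lambda)$ defined before the lemma were designed precisely so that the leading exponential terms vanish, leaving favorable-sign remainders; (iii) on the ``plateau'' pieces ($z\le0$ or $z\le z_1$) the functions are constant, so $\psi',\phi'$ and the derivative contributions drop out and one is left with checking an algebraic inequality in the constants $\e,\delta$, which the constraints $0<\e<\delta<1/2$, $a\ge4$, $a^*=1-1/a$, and \eqref{delta}, \eqref{q}, \eqref{ep1} are tailored to guarantee. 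One must also be careful that $\mathcal N_i$ is genuinely nonlocal: even where a profile is locally constant, $\mathcal N_i[u](z)=\int J_i(y)u(z-y)\,dy-u(z)$ samples $u$ across a breakpoint, so on the plateau $\int J_i(y)u(z-y)\,dy \le u(z)$ (for an upper solution that is $\le$ its plateau value everywhere) gives $\mathcal N_i[u](z)\le0$, and symmetrically $\ge0$ for the lower solutions; this monotonicity-of-averages observation handles the nonlocal term cleanly without needing the explicit $I_i$.

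The steps, in order, would be: \textbf{Step 1 ($\mU_1\le0$).} For $z\le0$, $\ophi\equiv1-\e$ so $\ophi'=0$, $\mathcal N_1[\ophi](z)\le0$ by the averaging remark, and the reaction term is $a(1-\e)\e-\upsi(z)$; since $\upsi\ge\delta$ on its first plateau and $z\le0<z_1$, and $\e<\delta/(1+s\lambda_1+a)\le \delta/a$, we get $a\e(1-\e)-\delta<a\e-\delta<0$. For $z>0$, $\ophi=1-\e\elz$ with $\lambda_1$ in place of $\ld_1$, so $\mathcal N_1[\ophi](z)=-\e(I_1(\lambda_1)-1)e^{-\lambda_1 z}$ and $s\ophi'=s\e\lambda_1 e^{-\lambda_1 z}$; collecting, the $e^{-\lambda_1 z}$-coefficient becomes $\e(-(I_1(\lambda_1)-1)+s\lambda_1 - a) + \text{(sign of }\upsi)$, and one checks $-(I_1(\lambda_1)-1)+s\lambda_1-a<0$ after noting... actually here one uses that $B(\lambda_1)<0$ would require $\lambda_1\le\lambda_0$; more carefully, since $\lambda_0<\lambda_1$ we only know $B(\lambda_0)<0$, so for $\ophi$ we instead bound crudely: $-\e(I_1(\lambda_1)-1)e^{-\lambda_1z}+s\e\lambda_1 e^{-\lambda_1z}+a(1-\e\elz)(\e\elz)-\upsi(z)\le s\e\lambda_1 e^{-\lambda_1 z}+a\e e^{-\lambda_1 z}-\upsi(z)$, and using $\upsi(z)\ge e^{-\lambda_1 z}-qe^{-\mu\lambda_1 z}$ (valid for all $z$ since $\delta=f(z_1)\le f(z)$ fails... ) — the precise comparison of $\upsi$ with $e^{-\lambda_1 z}$ near $z_1$ is exactly where \eqref{ep1}'s second bound enters, forcing $\e(1+s\lambda_1+a)e^{-\lambda_1 z}\le e^{-\lambda_1 z}-qe^{-\mu\lambda_1 z}$ for $z\ge z_1$ and the first bound handling $0<z\le z_1$. \textbf{Step 2 ($\mU_2\le0$).} On $z\le0$: $\opsi\equiv1$, $\mathcal N_2[\opsi]\le0$, reaction is $b(1-\opsi/\ophi)=b(1-1/(1-\e))<0$. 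On $z>0$: $\opsi=e^{-\lambda_1 z}$, so $d\mathcal N_2[\opsi]+s\opsi'=(d(I_2(\lambda_1)-1)-s\lambda_1)e^{-\lambda_1 z}=(A(\lambda_1)-b)e^{-\lambda_1 z}=-b e^{-\lambda_1 z}$ since $A(\lambda_1)=0$; then adding $b\opsi(1-\opsi/\ophi)\le b\opsi$ gives $\le0$. \textbf{Step 3 ($\mL_1\ge0$).} On $z\le0$: $\uphi\equiv\frac12$, $\mathcal N_1[\uphi]\ge0$, reaction $\ge \frac a2\cdot\frac12-\opsi$; since $\opsi\le1$ and $a\ge4$, $\frac a4-1\ge0$. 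On $z>0$: $\uphi=1-\frac12 e^{-\lambda_0 z}$, compute $\mathcal N_1[\uphi]+s\uphi'=\frac12(-(I_1(\lambda_0)-1)+s\lambda_0)e^{-\lambda_0 z}=-\frac12 B(\lambda_0)e^{-\lambda_0 z}>0$ by the choice of $\lambda_0$; the remaining reaction $a\uphi(1-\uphi)-\opsi$ needs $\opsi\le e^{-\lambda_1 z}$ to be dominated, and since $\lambda_0<\lambda_1$ the positive $B(\lambda_0)$-term of order $e^{-\lambda_0 z}$ eventually dominates $e^{-\lambda_1 z}$ — but one needs it for all $z>0$, so a uniform estimate comparing $-\frac12 B(\lambda_0)e^{-\lambda_0 z}$ against $e^{-\lambda_1 z}+$ (lower-order) is required; this and the behavior near $z=0$ (where $\uphi=\frac12$ and one matches to the plateau) must be checked. \textbf{Step 4 ($\mL_2\ge0$).} This is the delicate one: on $z\le z_1$, $\upsi\equiv\delta$, $\mathcal N_2[\upsi]\ge0$, reaction $b\delta(1-\delta/\uphi)$ with $\uphi\ge\frac12$ on $(-\infty,0]$ and $\uphi\ge\frac12$ for $z>0$ too, and $\delta<a^*<1$ so $\delta/\uphi<2\delta<1$, giving $\ge0$ — but one also needs $\mathcal N_2[\upsi]\ge0$ which fails the naive averaging bound near $z=z_1$ since $\upsi$ drops below $\delta$ for $z>z_1$; here one uses that $\upsi(z)\le\delta$ for $z\ge z_1$ as well (since $f(z)\le f(z_M)$ and $f(z_1)=\delta=f$ at the other branch point... ), so $\upsi\le\delta$ everywhere on $[z_1,\infty)$ and $\upsi=\delta$ on $(-\infty,z_1]$, whence $\int J_2 \upsi(z-y)\,dy\le\delta$ for every $z\le z_1$, i.e. $\mathcal N_2[\upsi](z)\ge0$ there. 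For $z>z_1$, $\upsi=e^{-\lambda_1 z}-qe^{-\mu\lambda_1 z}$, so $d\mathcal N_2[\upsi]+s\upsi'=(A(\lambda_1)-b)e^{-\lambda_1 z}-q(A(\mu\lambda_1)-b)e^{-\mu\lambda_1 z}=-be^{-\lambda_1 z}+q(b-A(\mu\lambda_1))e^{-\mu\lambda_1 z}$ (using $\mu<\lambda_2/\lambda_1$ so $\mu\lambda_1<\lambda_2$ and $A(\lambda_1)=0$); then $b\upsi(1-\upsi/\uphi)\ge b\upsi(1-2\upsi)$ using $\uphi\ge\frac12$, so $\mL_2\ge -be^{-\lambda_1 z}+q(b-A(\mu\lambda_1))e^{-\mu\lambda_1 z}+b\upsi-2b\upsi^2 = q(b-A(\mu\lambda_1))e^{-\mu\lambda_1 z}-bqe^{-\mu\lambda_1 z}-2b\upsi^2 = -qA(\mu\lambda_1)e^{-\mu\lambda_1 z}-2b\upsi^2$, and since $\upsi^2\le e^{-2\lambda_1 z}\le e^{-\mu\lambda_1 z}$ (as $\mu<2$ and $z>z_1>0$) this is $\ge(-qA(\mu\lambda_1)-2b)e^{-\mu\lambda_1 z}\ge0$ exactly by the choice \eqref{q0} of $q$.

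The main obstacle is \textbf{Step 4}, the $\mL_2$ inequality for the lower solution of the predator: it is the only place where all the fine choices interact simultaneously — the exponent $\mu$ must satisfy $\mu<\lambda_2/\lambda_1$ (to keep $\mu\lambda_1$ in the interval where $A<0$), $\mu<2$ (so that $e^{-2\lambda_1 z}\le e^{-\mu\lambda_1 z}$ lets the quadratic loss be absorbed), $q$ must be large per \eqref{q0} to dominate the $-2b\upsi^2$ term, yet $q$ must not be so large that $\upsi$ becomes negative where we want it positive (controlled by $z_0$, and \eqref{q}), and $\delta$ small enough that $\delta/\uphi<1$ on the plateau while also $\delta\le f(z_M)$ so that the level $z_1$ with $f(z_1)=\delta$ exists. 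Threading all of these, plus the subtle nonlocal-averaging sign checks at the breakpoints $0$ and $z_1$ (where one cannot use a naive one-sided bound and must instead invoke the global bound $\upsi\le\delta$, $\ophi\le$ its plateau, etc.), is the real content; Steps 1--3 are comparatively routine once the averaging remark and the identities $A(\lambda_1)=0$, $B(\lambda_0)<0$ are in hand. I would also explicitly record at the end that all four functions are bounded, positive, and continuous with $\upsi\le\opsi$, $\uphi\le\ophi$ on all of $\R$ — the last of these requiring a short check that $1-\frac12 e^{-\lambda_0 z}\le 1-\e e^{-\lambda_1 z}$, i.e. $\e e^{-\lambda_1 z}\le\frac12 e^{-\lambda_0 z}$, which follows from $\e<\frac12$ and $\lambda_0<\lambda_1$ for $z>0$, and is trivial for $z\le0$.
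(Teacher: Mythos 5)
The most serious gap is in your Step 4 on the plateau $z<z_1$. Your claim that $\underline\psi\le\delta$ on $[z_1,\infty)$ is false: since $z_1\in(z_0,z_M)$ and $f$ increases on $(z_0,z_M)$, one has $\underline\psi(z)=f(z)>\delta$ just to the right of $z_1$; and even if $\int_{\mathbb R} J_2(y)\underline\psi(z-y)\,dy\le\delta$ were true, that would give $\mathcal N_2[\underline\psi](z)\le 0$, not $\ge 0$ --- the sign is backwards. In fact $\mathcal N_2[\underline\psi]\ge0$ cannot be guaranteed on $(-\infty,z_1)$ when $J_2$ has unbounded support, because $\underline\psi$ falls below $\delta$ in the far right tail. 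The inequality \eqref{l2} on the plateau is precisely where the hypothesis $d<b$ must enter, and your argument never uses it: the correct route (the paper's) is the crude bound $\mathcal N_2[\underline\psi](z)\ge-\delta$ (from $\underline\psi\ge0$), so that $\mathcal L_2(z)\ge-d\delta+b\delta(1-2\delta)\ge0$ thanks to $\delta<\frac12\bigl(1-\frac{d}{b}\bigr)$, i.e.\ \eqref{delta}. Without this, the plateau case of \eqref{l2} is unproved, and unprovable by the route you describe.

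Two further defects. In Step 1 you apply your ``averaging remark'' to $\overline\phi$ on $z\le0$, but $\overline\phi$ exceeds its plateau value $1-\e$ for $z>0$, so $\mathcal N_1[\overline\phi](z)\le0$ is false there (it is in fact $\ge0$); likewise the identity $\mathcal N_1[\overline\phi](z)=-\e\,[I_1(\lambda_1)-1]e^{-\lambda_1z}$ for $z>0$ is not valid for the piecewise profile, and $I_1(\lambda_1)$ need not even be finite, since $\lambda_1<\hat\lambda_2$ gives no control relative to $\hat\lambda_1$. The correct and simple bound is $\overline\phi\le1$, hence $\mathcal N_1[\overline\phi](z)\le1-\overline\phi(z)$, which produces exactly the coefficient $1+s\lambda_1+a$ that \eqref{ep1} is built for; your final inequalities happen to survive, but the justification as written is wrong. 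In Step 3 for $z>0$ you leave the decisive estimate unproved (``a uniform estimate \dots must be checked''), and the mechanism you propose --- that the $-B(\lambda_0)$ term dominates $e^{-\lambda_1z}$ --- cannot work uniformly, since $-B(\lambda_0)$ may be arbitrarily small; what closes this step is $\overline\psi(z)=e^{-\lambda_1z}\le e^{-\lambda_0z}$ together with $a\underline\phi(z)[1-\underline\phi(z)]\ge\frac a4e^{-\lambda_0z}$ and $a\ge4$, giving $\mathcal L_1(z)\ge\frac12e^{-\lambda_0z}\{-B(\lambda_0)+a/2-2\}\ge0$. By contrast, your Step 2 and the $z>z_1$ half of Step 4 do match the paper's argument, provided the ``$=$'' signs are read as the one-sided inequalities coming from $\overline\psi(\xi)\le e^{-\lambda_1\xi}$ and $\underline\psi(\xi)\ge f(\xi)$ on all of $\mathbb R$.
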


\begin{proof}
{\bf Step 1.} Note that $\ophi\le 1$ in $\R$. For $z<0$, $\ophi(z)=1-\e$, whence $\bN[\ophi](z)\le \e$ and so 
\beaa
\mU_1(z)\le [\e+a(1-\e)\e]-\delta<0,\; z<0,
\eeaa
due to $0<\e<\delta/(1+s\ld_1+a)<\delta/(1+a)$. For $z>0$, we have $\ophi(z)=1-\e\elz$ and so $\bN[\ophi](z)\le \e\elz$. Hence
\beaa
\mU_1(z)\le \e e^{-\ld_1z}+\e s\ld_1e^{-\ld_1 z}+a\e e^{-\ld_1 z}-\upsi(z)=\e(1+s\ld_1+a)e^{-\ld_1 z}-\upsi(z),\; z>0.
\eeaa
For $z\in(0,z_1)$, there holds $\upsi(z)=\delta$, which implies that 
$$\mU_1(z)\le \e(1+s\ld_1+a)-\delta<0,$$ 
since $\e<\delta/(1+s\ld_1+a)$. For $z>z_1$, we have $\upsi(z)=\elz-qe^{-\mu\ld_1z}$ and so
\beaa
\mU_1(z)\le e^{-\ld_1 z}[\e (1+s\ld_1+a)-1]+qe^{-\mu\ld_1z}<0,
\eeaa
provided
\be\label{q1}
q<[1-\e(1+s\ld_1+a)]e^{(\mu-1)\ld_1z_1}. 
\ee
But \eqref{q1} holds due to \eqref{ep1}. Therefore, \eqref{u1} holds for all $z\in\bR\setminus\{0,z_1\}$.

{\bf Step 2.} For $z<0$, since $\opsi(z)=1$ and $\opsi\le 1$ in $\bR$, we have $\bN[\opsi](z)\le 0$. Hence 
\beaa
\mU_2(z)\le b(1-1/\ophi(z))\le 0,\; z<0,
\eeaa
due to $0<\ophi\le 1$ in $\R$. For $z>0$, from $\opsi(z)=e^{-\ld_1z}$ and $\opsi(z-y)\le e^{-\ld_1(z-y)}$ for all $y\in\bR$ it follows that
\beaa
\mU_2(z)\le d[I_2(\ld_1)-1]\elz-s\ld_1e^{-\ld_1z}+b\elz=A(\ld_1)\elz=0,
\eeaa
due to $A(\ld_1)=0$. Therefore, \eqref{u2} holds for all $z\in\bR\setminus\{0\}$.

{\bf Step 3.} For $z<0$, we have $\uphi(z)=1/2$. Since $\uphi\ge 1/2$ in $\R$, we obtain $\bN[\uphi](z)\ge 0$ for $z<0$. Then $\mL_1(z)\ge a/4-1\ge 0$ for $z<0$, using $a\ge 4$. For $z>0$, we have $\uphi(z)=1-e^{-\ld_0z}/2\ge1/2$ and $\opsi(z)=\elz$. Since $\uphi(z-y)\ge 1-e^{-\ld_0(z-y)}/2$ for all $y\in\bR$ and $\uphi\ge 1/2$ in $\R$, we get, for $z>0$,
\beaa
\mL_1(z)&\ge& [1- e^{-\ld_0z}  I_1(\ld_0)/2]-(1- e^{-\ld_0z} /2)+s\ld_0 e^{-\ld_0z} /2+(a/2)e^{-\ld_0z} /2-\elz\\
&\ge&\frac{1}{2}e^{-\ld_0z}\{-[I_1(\ld_0)-1-s\ld_0]+a/2-2\}\ge 0,
\eeaa
due to $\elz<e^{-\ld_0z}$, $B(\ld_0)<0$ and $a\ge 4$. Therefore, \eqref{l1} holds for all $z\in\bR\setminus\{0\}$.

{\bf Step 4.} For $z<z_1$, $\upsi(z)=\delta$ and we compute
\beaa
\mL_2(z)\ge -d\delta+b\delta(1-2\delta)\ge 0,
\eeaa
using $\upsi\ge 0$, $\uphi\ge 1/2$ and~\eqref{delta}. For $z>z_1$, since $\upsi(y)\ge e^{-\ld_1y}-qe^{-\mu\ld_1y}$ for all $y\in\bR$, we compute
\beaa
\mL_2(z)&\ge& d\{[\elz I_2(\ld_1)-qe^{-\mu\ld_1z}I_2(\mu\ld_1)]-(\elz-qe^{-\mu\ld_1z})\}\\
&&\quad -s(\ld_1\elz-q\mu\ld_1e^{-\mu\ld_1z})+b(\elz-qe^{-\mu\ld_1z})[1-\upsi(z)/\uphi(z)]\\
&\ge&\elz A(\ld_1)-qe^{-\mu\ld_1z}A(\mu\ld_1)-2be^{-2\ld_1z}\\
&=&\{-e^{-\mu\ld_1z}A(\mu\ld_1)\}\left\{q-\frac{2be^{(\mu-2)\ld_1z}}{[-A(\mu\ld_1)]}\right\}\ge 0,
\eeaa
using $\upsi(z)\le \elz$, $\uphi\ge 1/2$, $A(\ld_1)=0$, $A(\mu\ld_1)<0$, $\mu<2$,
\beaa
e^{(\mu-2)\ld_1z}\le e^{(\mu-2)\ld_1z_1}<1,\; z>z_1>0,
\eeaa
and \eqref{q0}. Here we have used $(\alpha-\beta)^2\le\alpha^2$ for any $\alpha\ge\beta>0$, applied to $\alpha:=e^{-\lambda_1z}$ and $\beta:=e^{-\mu\lambda_1z}$. Therefore, \eqref{l2} holds for all $z\in\bR\setminus\{z_1\}$.

Lastly, it is clear by construction that $1/2\le\uphi\le\ophi<1$ and $0<\upsi\le\opsi\le1$ in $\bR$. The lemma is thereby proved.
\end{proof}


\subsubsection{The case $s=s^*$}

In this case, we have $\ld_1=\ld_2>0$, i.e., $\ld_1$ is a double root of
$$A(\ld;s^*)=0,$$
with necessarily $\lambda_1<\hat{\lambda}_2$. In this case, we also have
\be\label{dA}
d\int_\bR J_2(y)ye^{\ld_1y}dy=s^*.
\ee
Suppose that $J_2$ has a compact support such that its support is contained in $[-S,S]$ for some $S\in(0,\infty)$. We also choose a small positive constant $\ld_0\in(0,\min\{\ld_1,\hat{\ld}_1\})$ such that
$$B(\ld_0;s^*)<0.$$
First, we choose a fixed constant $h>{\lambda_1} e$ large enough such that the equation $hze^{-\ld_1z}=1$ has exactly two roots $z_1,z_2$ with
$$0<z_1<\frac{1}{\ld_1}<z_2<\infty\ \hbox{ and }\ z_2-z_1>S.$$
Note that $hze^{-\ld_1z}>1$ for all $z\in(z_1,z_2)$, and that the map $z\mapsto hze^{-\ld_1z}$ is decreasing in~$[z_2,\infty)$. Secondly, we choose $z_3>z_2$ large enough such that
\be\label{z3}
he^{-\ld_0z_3}\le\frac{a(\ld_1-\ld_0)e}{4}.
\ee
Thirdly, for any positive constant $q$, one can check that the function
$$g(z):=( hz-q\sqrt{z} )e^{ -\lambda_1 z},\quad z\geq 0,$$
has a zero $z_0 := (q/h)^2\in(0,\infty)$ such that $g>0$ in $(z_0,\infty)$, $g(z_0)=0$ and $g$ has a unique maximal point $z_M\in(z_0,\infty)$. Note that $g(z_M)>0$. Furthermore, $g$ is increasing in $[z_0,z_M]$ and decreasing in $[z_M,\infty)$. We choose a constant $q$ large enough such that $z_0>z_2$ and
\be\label{qq}
q>\frac{16bh^2\max_{z>0}\{z^2(z+S)^{3/2}\elz\}}{d\int_\bR J_2(y)y^2e^{\ld_1y}dy}.
\ee
Then we choose $z_4$ in $(z_0,z_M)$ such that $g(z_4) = \delta$, for a fixed $\delta$ satisfying
$$0<\delta<\min\Big\{g(z_M),a^*,\frac12\Big(1-\frac{d}{b}\Big)\Big\},$$
so that~\eqref{delta} still holds. Lastly, with the above choices of $h,q,\delta$, we choose $\e$ such that
\be\label{ep2}
0<\e<\frac{\min\{\delta,hz_4-q\sqrt{z_4}\}}{1+s^*\ld_1+a}.
\ee
Note that $0<\e<\delta<1/2$.

Then we define the bounded positive continuous functions
\begin{equation}\label{uplosol2}
 \begin{aligned}
  \overline{\phi}(z) &=\left\{
                           \begin{array}{ll}
                             1-\e,                            & \hbox{$z\leq 0$,} \\
                             1-\e\elz, & \hbox{$z>0$,}
                           \end{array}
                         \right.\quad
  \underline{\phi}(z) =\left\{
                           \begin{array}{ll}
                               \frac{1}{2},                           & \hbox{$z\leq z_3$,} \\
                             1-\frac{1}{2}e^{-\ld_0(z-z_3)}, & \hbox{$z>z_3$,}
                           \end{array}
                         \right.\\
  \overline{\psi}(z) &=\left\{
                           \begin{array}{ll}
                             1,                             & \hbox{$z\leq z_2$,} \\
                             hze^{ -\lambda_1z},                  & \hbox{$z> z_2$,}
                           \end{array}
                         \right.\quad
  \underline{\psi}(z) =\left\{
                           \begin{array}{ll}
                             \delta,                                      & \hbox{$z\leq z_4$,} \\
                             (hz-q\sqrt{z}) e^{ -\lambda_1z},      & \hbox{$z> z_4$.}
                           \end{array}
                         \right.
 \end{aligned}
\end{equation}

\begin{lemma}\label{la:upper-lower2}
Assume that $a\ge 4$, $d<b$, and that $J_2$ has a compact support such that its support is contained in $[-S,S]$ for some $S\in(0,\infty)$. For $s=s^{*}$, the functions $(\overline{\phi},\overline{\psi})$ and~$(\underline{\phi},\underline{\psi})$ defined in \eqref{uplosol2} are a pair of upper and lower solutions of \eqref{TWS}.
\end{lemma}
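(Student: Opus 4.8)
The plan is to verify the four differential inequalities \eqref{u1}--\eqref{l2} of Definition~\ref{low-up} for the piecewise functions in \eqref{uplosol2}, proceeding region by region exactly as in the proof of Lemma~\ref{la:upper-lower1}, with the finite exceptional set taken to be $E=\{0,z_2,z_3,z_4\}$. The orientation of the pieces and the required monotonicity inequalities $\uphi(z-y)\ge 1-\tfrac12 e^{-\ld_0(z-z_3-y)}$, $\opsi(z-y)\le h(z-y)e^{-\ld_1(z-y)}$ and $\upsi(z-y)\ge (h(z-y)-q\sqrt{z-y})e^{-\ld_1(z-y)}$ are again secured by the compact-support hypothesis on $J_2$ (the crucial point being that $z\mapsto hze^{-\ld_1z}$ is decreasing only on $[z_2,\infty)$, which is why we imposed $z_2-z_1>S$), together with the fact that $I_1$ has no support restriction because $\uphi$ is globally $\ge 1/2$ and $\ophi\le 1$.

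I would organize the proof into four steps. \textbf{Step 1 (inequality \eqref{u1} for $\ophi$).} For $z<0$, $\ophi=1-\e$ gives $\bN_1[\ophi]\le\e$ and $\mU_1(z)\le\e(1+a)-\delta<0$ by \eqref{ep2}; for $0<z<z_2$ one still has $\upsi=\delta$ and the same bound $\mU_1(z)\le\e(1+s^*\ld_1+a)-\delta<0$ works (using $\ophi(z)=1-\e\elz$, $\bN_1[\ophi]\le\e\elz\le\e$); for $z_2<z<z_4$, $\upsi=\delta$ and we still bound crudely; for $z>z_4$, $\upsi=(hz-q\sqrt z)\elz$ and we must show $\e(1+s^*\ld_1+a)hz\elz\le (hz-q\sqrt z)\elz$, i.e. $\e(1+s^*\ld_1+a)\le 1-q/(h\sqrt z)$, which follows since $z>z_4$ forces $q/(h\sqrt z)<q/(h\sqrt{z_4})$ and $\e<(hz_4-q\sqrt{z_4})/((1+s^*\ld_1+a)hz_4)$ is implied by \eqref{ep2}. \textbf{Step 2 (inequality \eqref{u2} for $\opsi$).} For $z<z_2$, $\opsi=1$, $\opsi\le1$, so $\bN_2[\opsi]\le0$ and $\mU_2(z)\le b(1-1/\ophi)\le0$; for $z>z_2$, $\opsi=hze^{-\ld_1z}$, and using $\opsi(z-y)\le h(z-y)e^{-\ld_1(z-y)}$ for all $y$ (valid because the support of $J_2$ lies in $[-S,S]$ and $z-S>z_2$ — wait, not quite; rather because $hze^{-\ld_1z}\ge h(z-y)e^{-\ld_1(z-y)}$ holds whenever $z-y\ge z_2$ or $z-y\le z_1$, and the intermediate band $z-y\in(z_1,z_2)$ is excluded when $z>z_2+S$; for $z_2<z<z_2+S$ one uses instead that $\opsi\le 1$ trivially and a direct estimate) one gets $\mU_2(z)\le [hz\,(d(I_2(\ld_1)-1)-s^*\ld_1+b)+(-d\ld_1 I_2'\text{-type correction})]e^{-\ld_1 z}$; since $A(\ld_1;s^*)=0$ and $A'(\ld_1;s^*)=0$ by \eqref{dA}, the coefficient of $hze^{-\ld_1z}$ vanishes and the derivative term $d(\int J_2(y)(-y)e^{\ld_1 y}dy)=-s^*$ cancels the $s^*(hze^{-\ld_1z})' $ contribution, leaving exactly $0$. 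This double-root cancellation is the analogue of the single computation $A(\ld_1)\elz=0$ in Lemma~\ref{la:upper-lower1} and is the heart of the $s=s^*$ construction.

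\textbf{Step 3 (inequality \eqref{l1} for $\uphi$).} For $z<z_3$, $\uphi=1/2$, $\uphi\ge1/2$, $\bN_1[\uphi]\ge0$ and $\mL_1(z)\ge a/4-\opsi(z)$; here one needs $\opsi\le a/4$ on $(-\infty,z_3)$, which for $z\le z_2$ reads $1\le a/4$ (true since $a\ge4$) and for $z_2<z<z_3$ reads $hze^{-\ld_1z}\le a/4$ — this is where condition \eqref{z3} on $z_3$ enters, bounding $h$ against the decay. For $z>z_3$, $\uphi=1-\tfrac12 e^{-\ld_0(z-z_3)}$ and, exactly as in Lemma~\ref{la:upper-lower1}, $\mL_1(z)\ge\tfrac12 e^{-\ld_0(z-z_3)}\{-[I_1(\ld_0)-1-s^*\ld_0]+a/2-2\}\ge0$ using $B(\ld_0;s^*)<0$, $a\ge4$, and $\opsi(z)=hze^{-\ld_1z}\le\tfrac12 e^{-\ld_0(z-z_3)}\cdot(\text{something}\le1)$ — the last bound again coming from \eqref{z3}, which was designed precisely so that $hze^{-\ld_1z}\le \tfrac14 a(\ld_1-\ld_0)e\cdot e^{-\ld_0(z-z_3)}\cdot(\cdots)$ for $z>z_3$. \textbf{Step 4 (inequality \eqref{l2} for $\upsi$).} For $z<z_4$, $\upsi=\delta$, $\upsi\ge0$, $\uphi\ge1/2$, so $\mL_2(z)\ge-d\delta+b\delta(1-2\delta)\ge0$ by \eqref{delta}. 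For $z>z_4$, $\upsi=(hz-q\sqrt z)e^{-\ld_1z}$, and using $\upsi(z-y)\ge(h(z-y)-q\sqrt{z-y})e^{-\ld_1(z-y)}$ (valid for $|y|\le S$, $z>z_4>z_2$, on the relevant range) together with a Taylor expansion of the nonlocal term around $\ld_1$ using the double-root identities, the leading $hze^{-\ld_1z}$-terms cancel via $A(\ld_1;s^*)=A'(\ld_1;s^*)=0$, and the next-order $\sqrt z\,e^{-\ld_1z}$-terms combine to give a positive multiple of $q\,d\int_\bR J_2(y)y^2 e^{\ld_1y}dy$ minus an error of size $O(h^2 z^2 e^{-\ld_1 z})$ coming from the quadratic nonlinearity $\upsi^2/\uphi\le 2(hz)^2e^{-2\ld_1 z}$ and from the $(z+S)^{3/2}$-type remainder in expanding $\sqrt{z-y}$; the choice \eqref{qq} of $q$ is exactly what makes the positive term dominate, so $\mL_2(z)\ge0$.

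The main obstacle is Step 4 (and to a lesser extent the $z>z_2$ case of Step 2): unlike the strictly-between-roots situation of Lemma~\ref{la:upper-lower1}, where $A(\ld_1)<0$ gives room to absorb the correction terms, at $s=s^*$ both $A(\ld_1;s^*)$ and $A'(\ld_1;s^*)$ vanish, so the usual exponential ansatz degenerates and one must use the $ze^{-\ld_1z}$ / $\sqrt z\,e^{-\ld_1z}$ profile; the delicate estimate is then a careful second-order Taylor expansion of $\int_\bR J_2(y)\sqrt{z-y}\,e^{\ld_1 y}dy$ in which the compact support of $J_2$ is what keeps the remainder integrable and of the controlled order $z^{-3/2}$-below-leading, and one has to check the positivity of the coefficient $\tfrac18 d\int J_2(y)y^2e^{\ld_1y}dy$ of the $q\sqrt z\,e^{-\ld_1 z}$ term against the quadratic-nonlinearity error $\le 2b h^2 z^2 e^{-2\ld_1 z}$ and the $\sqrt{z-y}$-expansion error, uniformly for $z>z_4$; the bound \eqref{qq} is tailored to this. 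Everything else is a bookkeeping exercise of the same flavour as Lemma~\ref{la:upper-lower1}, checking monotonicity of each piece of each profile against the shift by $y$ with $|y|\le S$ and invoking $a\ge4$, $d<b$, \eqref{delta}, \eqref{z3}, \eqref{ep2} in the appropriate region.
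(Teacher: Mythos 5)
Your outline follows the paper's proof in all essentials: the same four-step, region-by-region verification of \eqref{u1}--\eqref{l2} with exceptional set $\{0,z_2,z_3,z_4\}$, the same use of the double-root identities $A(\ld_1;s^*)=0$ and \eqref{dA} to make the $hz\elz$- and $h\elz$-terms cancel exactly in Steps 2 and 4, and in Step 4 the same mechanism: comparing $\int_{-S}^S J_2(y)\sqrt{z-y}\,e^{\ld_1 y}dy$ with $\sqrt z$ yields a positive term of size $\frac{d}{8(z+S)^{3/2}}\int_\bR J_2(y)y^2e^{\ld_1 y}dy$ carrying the factor $q$, which \eqref{qq} pits against the quadratic error $2bh^2z^2e^{-2\ld_1 z}$. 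So the route is the paper's route. Two of your pointwise justifications, however, do not stand as written.

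(i) Step 2, $z>z_2$: the inequality to be checked is $\opsi(\xi)\le h\xi e^{-\ld_1\xi}$ at $\xi=z-y$, $|y|\le S$, not the comparison $hz\elz\ge h(z-y)e^{-\ld_1(z-y)}$ that you discuss (which is false in general and beside the point). Since $z-y>z_2-S>z_1$ (this is exactly why $z_2-z_1>S$ was imposed) and $h\xi e^{-\ld_1\xi}\ge 1=\opsi(\xi)$ on $(z_1,z_2]$ by the very definition of $z_1,z_2$, the desired bound holds for \emph{every} $z>z_2$ at once; no special case $z\in(z_2,z_2+S)$ arises, and your proposed fallback there (``use $\opsi\le1$ and a direct estimate'') is unsubstantiated and risky, because the sharp computation gives $\mU_2(z)\le0$ with equality, i.e.\ no slack, so the crude bound leaves an uncontrolled positive remainder $d[1-hz\elz]$ plus a possibly positive reaction term. (ii) Step 1, $z>z_4$: you inserted a spurious factor $hz$; since $\ophi(z)=1-\e\elz$, the correct bound is $\mU_1(z)\le\{\e(1+s^*\ld_1+a)-(hz-q\sqrt z)\}\elz$, and it suffices that $\e(1+s^*\ld_1+a)\le hz_4-q\sqrt{z_4}$, which is \eqref{ep2} combined with the monotonicity of $z\mapsto hz-q\sqrt z$ on $[z_4,\infty)$; your stronger requirement $\e<(hz_4-q\sqrt{z_4})/[(1+s^*\ld_1+a)hz_4]$ is \emph{not} implied by \eqref{ep2} (it is off by the factor $hz_4>1$), so that step must be rewritten, though the fix is immediate. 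A minor further misattribution: in Step 3, on $(z_2,z_3)$ one has $hz\elz\le1\le a/4$ simply because the map $z\mapsto hz\elz$ is decreasing on $[z_2,\infty)$ with value $1$ at $z_2$; \eqref{z3} is needed only for $z>z_3$, where it dominates $\opsi(z)=hz\elz$ by $\frac{a}{4}e^{-\ld_0(z-z_3)}$ via $ze^{-(\ld_1-\ld_0)z}\le\frac{1}{(\ld_1-\ld_0)e}$.
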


\begin{proof}
As before, we also divide our proof into 4 steps.

{\bf Step 1.} Note that $\overline{\phi}\le1$ in $\R$. For $z<0$, we have $\mU_1(z)\le [\e+a(1-\e)\e]-\delta\le 0$, due to \eqref{ep2}. For $z\in(0,z_4)$, we have
\beaa
\mU_1(z)\le \e(1+s^*\ld_1+a)e^{-\ld_1 z}-\delta\le 0,
\eeaa
using $\e<\delta/(1+s^*\ld_1+a)$. For $z>z_4$, we have $\upsi(z)=(hz-q\sqrt{z})\elz$ and so
\beaa
\mU_1(z)\le \{\e (1+s^*\ld_1+a)-(hz-q\sqrt{z})\}\elz\le 0,
\eeaa
due to \eqref{ep2} and $hz-q\sqrt{z}\ge hz_4-q\sqrt{z_4}$ for all $z\geq z_4>z_0$. Therefore, \eqref{u1} holds for all $z\in\bR\setminus\{0,z_4\}$.

{\bf Step 2.} For $z<z_2$, since $\opsi(z)=1$ and $\opsi\le 1$ in $\bR$, we have $\bN[\opsi](z)\le 0$. Hence $\mU_2(z)\le 0$ for $z<z_2$, since $0<\ophi\le 1$ in $\R$. For $z>z_2$, we have $\opsi(z)=hze^{-\ld_1z}$ and 
\beaa
\int_\bR J_2(y)\opsi(z-y)dy=\int_{-S}^S J_2(y)\opsi(z-y)dy\le \int_\bR J_2(y)h(z-y)e^{-\ld_1(z-y)}dy,
\eeaa
since $z-y>z_2-S>z_1$ for all $y\in[-S,S]$ gives $\opsi(z-y)\le h(z-y)e^{-\ld_1(z-y)}$. Hence we get, for $z>z_2$,
\beaa
\mU_2(z)\le \{d[I_2(\ld_1)-1]-s^*\ld_1+b\}(hz\elz)-\left[d\int_\bR J_2(y)ye^{\ld_1y}dy-s^*\right](h\elz)=0,
\eeaa
using $A(\ld_1;s^*)=0$ and \eqref{dA}. Therefore, \eqref{u2} holds for all $z\in\bR\setminus\{z_2\}$.

{\bf Step 3.} For $z<z_3$, we have $\uphi(z)=1/2$. Hence $\mL_1(z)\ge a/4-1\ge 0$ for $z<0$, using $\uphi\ge 1/2$ in $\R$, $\overline{\psi}\le1$ in $\R$, and $a\ge 4$. For $z>z_3$, using $\uphi(z)=1-e^{-\ld_0(z-z_3)}/2$, $\uphi(z-y)\ge 1-e^{-\ld_0(z-y-z_3)}/2$ for all $y\in\bR$, and $\uphi\ge 1/2$ in $\R$, we get
\beaa
\mL_1(z)&\ge& \frac{1}{2}e^{-\ld_0(z-z_3)}\{-[I_1(\ld_0)-1-s^*\ld_0]+a/2\}-hz\elz,\\
&\ge&\frac{1}{2}e^{-\ld_0(z-z_3)}\{a/2-2hz\elz e^{\ld_0(z-z_3)}\}\ge 0,
\eeaa
due to $B(\ld_0;s^*)<0$ and \eqref{z3}. Here we have also used
\beaa
2hz\elz e^{\ld_0(z-z_3)}\le \frac{2he^{-\ld_0z_3}}{(\ld_1-\ld_0)e},\;\forall\, z>0.
\eeaa
Therefore, \eqref{l1} holds for all $z\in\bR\setminus\{z_3\}$.

{\bf Step 4.} For $z<z_4$, $\upsi(z)=\delta$ implies that
\beaa
\mL_2(z)\ge -d\delta+b\delta(1-2\delta)\ge 0,
\eeaa
using $\upsi\ge 0$, $\uphi(z)\ge 1/2$ and \eqref{delta}. Recall that $z_4>z_0>z_2>z_1+S>S$ and $J_2(y)=0$ for $y\in\bR\setminus[-S,S]$. Now, let $z>z_4$. Using $z-y>z_4-y>z_2-S>z_1>0$ for $y\in[-S,S]$ and 
\beaa
\mbox{$\upsi(\xi)\ge (h\xi-q\sqrt{\xi}) e^{ -\lambda_1 \xi}$ for all $\xi\ge 0$,}
\eeaa
we get that
\beaa
&&\int_\bR J_2(y)\upsi(z-y)dy=\int_{-S}^S J_2(y)\upsi(z-y)dy\\
&\ge&\int_{-S}^S J_2(y)[h(z-y)-q\sqrt{z-y}]e^{-\ld_1(z-y)}dy\\
&=&hz\elz\int_{-S}^S J_2(y)e^{\ld_1y}dy-h\elz\int_{-S}^S J_2(y)ye^{\ld_1y}dy-q\elz\int_{-S}^S J_2(y)\sqrt{z-y}e^{\ld_1y}dy\\
&=&hz\elz I_2(\ld_1)-h\elz\int_\bR J_2(y)ye^{\ld_1y}dy-q\elz\int_{-S}^S J_2(y)\sqrt{z-y}e^{\ld_1y}dy.
\eeaa
Then, using $\uphi\ge 1/2$, $A(\ld_1;s^*)=0$ and \eqref{dA}, we obtain
\beaa
\mL_2(z)&\ge& hz\elz\{ d[I_2(\ld_1)-1]-s^*\ld_1+b\}-h\elz\left\{d\int_\bR J_2(y)ye^{\ld_1y}dy-s^*\right\}\\
&&\quad -dq\elz\left\{\int_{-S}^S J_2(y)\sqrt{z-y}e^{\ld_1y}dy - \sqrt{z}\right\}+qs^*\left[\ld_1\sqrt{z}-\frac{1}{2\sqrt{z}}\right]\elz\\
&&\qquad -bq\sqrt{z}\elz-2bh^2z^2 e^{-2\ld_1z}\\
&:=&\elz[qQ_1(z)-Q_2(z)],
\eeaa
where
\beaa
&&Q_1(z):=-d\left\{\int_{-S}^S J_2(y)\sqrt{z-y}e^{\ld_1y}dy - \sqrt{z}\right\}+s^*\left[\ld_1\sqrt{z}-\frac{1}{2\sqrt{z}}\right]-b\sqrt{z},\\
&&Q_2(z):=2bh^2z^2\elz.
\eeaa
Note that, using $A(\ld_1;s^*)=0$ and \eqref{dA}, we get
\beaa
Q_1(z)&=&d\int_{-S}^S J_2(y)[\sqrt{z}-\sqrt{z-y}]e^{\ld_1y}dy-\frac{d}{2\sqrt{z}}\int_{-S}^S J_2(y)ye^{\ld_1y}dy\\
&=&d\int_{-S}^S \left[\sqrt{z}-\sqrt{z-y}-\frac{y}{2\sqrt{z}}\right] J_2(y)e^{\ld_1y}dy\\
&=&d\int_{-S}^S \frac{1}{2\sqrt{z}(\sqrt{z}+\sqrt{z-y})^2}J_2(y)y^2e^{\ld_1y}dy
\ge \frac{d}{8(\sqrt{z+S})^3}\int_\bR J_2(y)y^2e^{\ld_1y}dy.
\eeaa
Hence we conclude that
\beaa
\mL_2(z)\ge \frac{d\elz}{8(\sqrt{z+S})^3}\left(\int_\bR J_2(y)y^2e^{\ld_1y}dy\right)
\left\{q-\frac{16bh^2[z^2(z+S)^{3/2}\elz]}{d\int_\bR J_2(y)y^2e^{\ld_1y}dy}\right\}\ge 0
\eeaa
for $z>z_4$, due to \eqref{qq}. Therefore, \eqref{l2} holds for all $z\in\bR\setminus\{z_4\}$.

One can also check that $1/2\le\uphi\le\ophi<1$ and $0<\upsi\le\opsi\le1$ in $\bR$, and the lemma is thus proved.
\end{proof}


\subsection{Tail limits for $s\ge s^*$}

First, for any $s\ge s^*$, the existence of a $C^1(\R)$ solution $(\phi,\psi)$ to~\eqref{TWS} such that $1/2\le\underline{\phi}\le\phi\le\overline{\phi}<1$ and $0<\underline{\psi}\le\psi\le\overline{\psi}\le1$ in $\R$ follows from Lemmas~\ref{la:upper-lower1}-\ref{la:upper-lower2} and Proposition~\ref{le6}. Moreover, by the construction of upper and lower solutions, it is clear that $(\phi,\psi)(\infty)=(1,0)$.

Next, we claim that the limit $(\phi,\psi)(-\infty)$ exists, by applying the method used in \cite{w21}. For the reader's convenience, we provide an outline as follows. To show the limit exists, we let
\beaa
\phi^-:=\liminf_{z\to -\infty}\phi(z),\ \ \phi^+:=\limsup_{z\to -\infty}\phi(z),\ \ 
\psi^-:=\liminf_{z\to -\infty}\psi(z),\ \ \psi^+:=\limsup_{z\to -\infty}\psi(z).
\eeaa
By the construction of upper-lower solutions, we have 
$$1/2\le\phi^-\le\phi^+\le 1\ \hbox{ and }\ 0<\delta\le\psi^-\le\psi^+\le1,$$
for some $\delta\in(0,a^*)$, in both cases $s>s^*$ and $s=s^*$.

Now, from~\eqref{TWS} and the positivity of $s$, both functions $\phi$ and $\psi$ have bounded derivatives $\phi'$ and $\psi'$, and by differentiating~\eqref{TWS}, the functions $\phi$ and $\psi$ are of class $C^2(\R)$ with bounded second-order derivatives ($\phi$ and $\psi$ are actually of class $C^\infty(\R)$). From the definition of $\psi^+$, there exists then a sequence $(z_n)_{n\in\N}$ diverging to $-\infty$ such that $\psi(z_n)\to\psi^+$, $\psi'(z_n)\to0$, and $\phi(z_n)\to\gamma$ as $n\to\infty$, for some $\gamma\in[\phi^-,\phi^+]\subset[1/2,1]$. By evaluating the second equation in~\eqref{TWS} at $z_n$ and passing to the limit as $n\to\infty$, one gets, by the same argument as that for \cite[(2.6)]{w21}, that $b\psi^+(1-\psi^+/\gamma)\ge0$, whence
$$\psi^+\le\phi^+.$$ 
We omit the details here. Similarly, one has
$$\phi^-\le\psi^-.$$
On the other hand, by considering a sequence $(\zeta_n)_{n\in\N}$ diverging to $-\infty$ such that $\phi(\zeta_n)\to\phi^-$, $\phi'(\zeta_n)\to0$, and $\psi(\zeta_n)\to\Gamma$ as $n\to\infty$, for some $\Gamma\in[\psi^-,\psi^+]$, by evaluating the first equation in~\eqref{TWS} at $\zeta_n$ and passing to the limit as $n\to\infty$, one gets that $a\phi^-(1-\phi^-)-\Gamma\le0$, whence
\be\label{phi-}
a\phi^-(1-\phi^-)\le\psi^+.
\ee
Similarly, one has
\be\label{phi+}
a\phi^+(1-\phi^+)\ge\psi^-.
\ee
Since $\psi^->0$ and $\phi^+\le1$, one infers that $\phi^+<1$, whence $\psi^+\le\phi^+<1$. Therefore, $a\phi^-(1-\phi^-)\le\psi^+<1$ and, since $a\ge4$ and $\phi^-\ge1/2$, one gets that $\phi^->1/2$. Finally,
\be\label{t1}
1/2<\phi^-\le\psi^-\le\psi^+\le\phi^+<1.
\ee

Let now
$$D:=\big\{\nu\in[0,1]\mid l(\nu)<\phi^-\le\phi^+<r(\nu)\big\},$$
where
\beaa
l(\nu):=\nu a^*+(1-\nu)/2,\ \ r(\nu):=\nu a^*+1-\nu,\ \ \nu\in[0,1].
\eeaa
Note that $1/2<a^*<1$. Hence $l(\nu)$ is increasing and $r(\nu)$ is decreasing in $\nu\in[0,1]$, and $l(1)=r(1)=a^*=1-1/a$. Secondly, due to \eqref{t1}, $0\in D$ and so, by continuity of~$l(\nu)$ and~$r(\nu)$ with respect to $\nu$, $\nu_0:=\sup D$ is well-defined such that $\nu_0\in(0,1]$.
For contradiction, we assume that $\nu_0<1$. 
Then, by passing to the limit,
\be\label{phi+-}
l(\nu_0)\le\phi^-\le\phi^+\le r(\nu_0),
\ee
and either $\phi^-=l(\nu_0)$ or $\phi^+=r(\nu_0)$. Then we divide the discussion into two cases.

{\it Case 1: $\phi^-=l(\nu_0)$}. From~\eqref{phi-} and~\eqref{t1}-\eqref{phi+-}, one knows that
$$al(\nu_0)[1-l(\nu_0)]=a\phi^-(1-\phi^-)\le\psi^+\le\phi^+\le r(\nu_0),$$
whence
$$\omega:=a l(\nu_0)[1-l(\nu_0)]-r(\nu_0)\le0.$$
But a straightforward calculation yields
$$\omega=(1-\nu_0)\,\Big[\frac{a}{4}-1+\frac{\nu_0(a-2)^2}{4a}\Big],$$
whence $\omega>0$, since $0<\nu_0<1$ and $a\ge4$. Therefore, case~1 is ruled out.

{\it Case 2: $\phi^+=r(\nu_0)$}. Similarly, from~\eqref{phi+}-\eqref{phi+-}, one knows that
$$a r(\nu_0)[1-r(\nu_0)]=a\phi^+(1-\phi^+)\ge\psi^-\ge\phi^-\ge l(\nu_0),$$
whence
$$\theta:=a r(\nu_0)[1-r(\nu_0)]-l(\nu_0)\ge0.$$
But a straightforward calculation yields
$$\theta=-(1-\nu_0)\,\Big[\frac{1}{2}-\frac{\nu_0}{a}\Big],$$
whence $\theta<0$, since $0<\nu_0<1$ and $a\ge4>2$. Therefore, case~2 is ruled out too.

These contradictions entail that $\nu_0=1$ and so the limit $\phi(-\infty)$ exists such that $\phi(-\infty)=a^*$. Finally, it follows from \eqref{t1} that $\psi(-\infty)=a^*$. This proves that $(\phi,\psi)(-\infty)=(a^*,a^*)$ and the proof of the existence part of Theorem~\ref{th:main} is thus complete.\qed


\section{Nonexistence}\label{sec3}

In this section, we provide a proof of the nonexistence part of Theorem~\ref{th:main} as follows. Although it is almost the same as that in \cite{DGLP19}, for the reader's convenience we give the details here.

\begin{theorem}
If $s<s^*$, then \eqref{TWS}-\eqref{BC} does not admit any positive solution $(\phi,\psi)$.
\end{theorem}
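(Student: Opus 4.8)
The plan is to argue by contradiction: suppose $(\phi,\psi)$ is a positive solution of~\eqref{TWS}-\eqref{BC} with $s<s^{*}$. By continuity and the boundary conditions~\eqref{BC} one has $m:=\inf_{\R}\phi>0$ and $\|\psi\|_{L^{\infty}(\R)}<\infty$, while $\psi(+\infty)=0$ and $\phi(+\infty)=1$. The only place where $s<s^{*}$ is used is the following consequence of the variational definition of $s^{*}$: the characteristic function
$$A(\ld)=A(\ld;s):=d[I_{2}(\ld)-1]-s\ld+b$$
satisfies $A(\ld;s)>0$ for every $\ld\in(0,\hat{\ld}_{2})$, with moreover $A(0^{+};s)=b>0$ and, by (H3), $A(\ld;s)\to+\infty$ as $\ld\uparrow\hat{\ld}_{2}$; hence $A(\cdot;s)$ has no zero on $[0,\hat{\ld}_{2})$ and $a_{0}:=\inf_{\ld\in[0,\hat{\ld}_{2})}A(\ld;s)>0$. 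All the work takes place at the tail $z\to+\infty$, where the second equation of~\eqref{TWS} linearizes around $\psi=0$ with zeroth-order coefficient $b$ and $A(\cdot;s)$ is its characteristic function, so the absence of an admissible exponential rate must be turned into a contradiction.

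First I would show that $\psi$ decays exponentially at $+\infty$. Rewriting the second equation of~\eqref{TWS} as
$$s\psi'(z)=\Big[d-b\Big(1-\tfrac{\psi(z)}{\phi(z)}\Big)\Big]\psi(z)-d\int_{\R}J_{2}(y)\,\psi(z-y)\,dy$$
and discarding the last, nonnegative term gives $s\psi'(z)\le\big[d-b\bigl(1-\psi(z)/\phi(z)\bigr)\big]\psi(z)$. Since $d<b$ and $\psi/\phi\to0$ at $+\infty$, one may fix $\eta\in(0,1-d/b)$ with $b\eta<a_{0}$ and $M>0$ such that $\psi(z)/\phi(z)\le\eta$ for $z\ge M$, so $\psi'(z)\le-\theta\psi(z)$ on $[M,\infty)$ with $\theta:=(b(1-\eta)-d)/s>0$, hence $\psi(z)\le\psi(M)e^{-\theta(z-M)}$ for $z\ge M$ (and, from the same equation, $\psi'$ decays at least as fast). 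Consequently the two-sided Laplace transform $\Psi(\ld):=\int_{\R}e^{\ld z}\psi(z)\,dz$ — whose part over $(-\infty,0)$ converges for every $\ld>0$ since $\psi$ is bounded — is finite on an interval $(0,\Lambda)$, where $\Lambda\in(0,+\infty]$ is the abscissa of convergence of $\ld\mapsto\int_{0}^{\infty}e^{\ld z}\psi(z)\,dz$; note $\Lambda\ge\theta>0$, and, as is standard, $e^{\ld z}\psi(z)\to0$ and $e^{\ld z}\psi'(z)\to0$ as $z\to+\infty$ for $\ld$ in that range.

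Next I would establish a Laplace-transform identity. Fix $\ld\in(0,\min\{\Lambda,\hat{\ld}_{2}\})$; multiplying the second equation of~\eqref{TWS} by $e^{\ld z}$ and integrating over $\R$, and using Fubini for the nonlocal part ($\int_{\R}e^{\ld z}\mathcal{N}_{2}[\psi](z)\,dz=(I_{2}(\ld)-1)\Psi(\ld)$), integration by parts for the transport part (boundary terms vanishing by the decay just recalled), and $\int_{\R}e^{\ld z}b\psi(1-\psi/\phi)\,dz=b\Psi(\ld)-b\int_{\R}e^{\ld z}\psi^{2}/\phi\,dz$, one obtains
$$A(\ld;s)\,\Psi(\ld)=b\int_{\R}e^{\ld z}\,\frac{\psi(z)^{2}}{\phi(z)}\,dz\ =:\ N(\ld).$$
Because $0<\psi^{2}/\phi\le(\|\psi\|_{\infty}/m)\psi$, this gives $A(\ld;s)=N(\ld)/\Psi(\ld)\le b\|\psi\|_{\infty}/m$ for every $\ld<\min\{\Lambda,\hat{\ld}_{2}\}$; together with $A(\ld;s)\to+\infty$ as $\ld\uparrow\hat{\ld}_{2}$ this forces $\Lambda<\hat{\ld}_{2}$ (in particular $\Lambda<\infty$ and $A(\Lambda;s)>0$). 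Splitting $N$ at a point $\Xi$ with $\psi/\phi\le a_{0}/(2b)$ on $[\Xi,\infty)$ — so that $\int_{(-\infty,\Xi)}e^{\ld z}\psi^{2}/\phi\,dz$ stays bounded as $\ld\uparrow\Lambda$ while the rest is $\le\tfrac{a_{0}}{2}\Psi(\ld)$ — one gets $(A(\ld;s)-\tfrac{a_{0}}{2})\Psi(\ld)\le C$, hence $\Psi$ is bounded on $(0,\Lambda)$.

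Finally I would close the argument with a Tauberian step of Ikehara type. Viewed in the complex variable $\ld$, $\Psi$ is holomorphic in $\{0<\Re\ld<\Lambda\}$ and, by the Pringsheim--Landau theorem, its abscissa of convergence $\Lambda$ is a singular point of $\Psi$. On the other hand $N$, being the two-sided Laplace transform of $b\psi^{2}/\phi$, which tends to $ba^{*}$ as $z\to-\infty$ and at $+\infty$ decays at least like $\psi^{2}$, hence strictly faster than $\psi$, is holomorphic on a strictly larger half-strip $\{0<\Re\ld<\Lambda'\}$ with $\Lambda'>\Lambda$, while $A(\cdot;s)$ is holomorphic and zero-free in a complex neighborhood of $\Lambda$ (since $A(\Lambda;s)>0$ and $\Lambda<\hat{\ld}_{2}$); hence the identity $\Psi=N/A(\cdot;s)$, valid for $\Re\ld<\Lambda$, continues $\Psi$ holomorphically past $\ld=\Lambda$, a contradiction. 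The routine parts are the consequence of $s<s^{*}$ and the manipulations yielding $A\Psi=N$; I expect the main obstacle to be the tail analysis at $+\infty$ — the exponential decay of $\psi$ (where the hypothesis $d<b$ of Theorem~\ref{th:main} is used in an essential way), the decay and integrability facts needed to justify the boundary terms and Fubini, and above all the rigorous form of the Pringsheim/Ikehara step (in particular that $N$ is holomorphic strictly past $\Lambda$) — these delicate estimates following the pattern of~\cite{DGLP19}.
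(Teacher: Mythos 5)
Your route is genuinely different from the paper's. The paper's proof is much softer: it sets $\varepsilon:=\inf_{\R}\phi>0$, observes that $V(x,t):=\psi(x-st)$ is a supersolution of the nonlocal Fisher--KPP equation $V_t=d\mathcal{N}_2[V]+bV(1-V/\varepsilon)$, and invokes the comparison principle together with the known spreading speed of that equation (which is exactly $s^*$, cf.\ Jin--Zhao), so that $\psi\big((s^*-s)t/2\big)$ would stay bounded away from $0$ as $t\to\infty$, contradicting $\psi(+\infty)=0$. Your Carr--Chmaj/Ikehara-type Laplace-transform argument is a legitimate alternative, and its skeleton is sound: $A(\cdot;s)>0$ with positive infimum on $(0,\hat{\lambda}_2)$ when $s<s^*$, the identity $A(\lambda;s)\Psi(\lambda)=N(\lambda)$, the boundedness of $\Psi$ up to the abscissa $\Lambda<\hat{\lambda}_2$, holomorphy of $N$ strictly past $\Lambda$ (which does need the pointwise bound $\psi(z)\le Ce^{-\theta z}$, not merely ``$\psi^2$ decays faster than $\psi$''), and Landau's singularity theorem. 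The trade-off is that the paper imports a spreading result, while your proof is self-contained at the tail but more restrictive.

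Two points are genuine gaps as written. First, the statement covers every $s<s^*$, including $s\le0$, but your decay step divides by $s$ (you set $\theta=(b(1-\eta)-d)/s>0$), so it tacitly assumes $s>0$; the cases $s\le 0$ do fall quickly to the same rearranged equation (for $s=0$ the identity $d\int_{\R}J_2(y)\psi(z-y)\,dy=[d-b(1-\psi/\phi)]\psi(z)$ has a positive left-hand side and a negative right-hand side for large $z$; for $s<0$ one gets $\psi'>0$ for all large $z$, incompatible with $\psi>0$ and $\psi(+\infty)=0$), but they must be treated. Second, your exponential decay, and hence the holomorphy of $N$ beyond $\Lambda$, uses $d<b$ essentially, whereas the theorem in Section~\ref{sec3} is stated and proved in the paper without $d<b$ (and without $a\ge4$); within the setting of Theorem~\ref{th:main} this is harmless, but your argument proves a strictly weaker statement. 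A further technical caveat: the claim that $e^{\lambda z}\psi(z)\to0$ and $e^{\lambda z}\psi'(z)\to0$ as $z\to+\infty$ for every $\lambda$ in the whole convergence range $(0,\Lambda)$ is not automatic from convergence of $\Psi$ (your pointwise bound only guarantees it for $\lambda<\theta$); the standard repair is to establish $A(\lambda;s)\Psi(\lambda)=N(\lambda)$ for $\lambda\in(0,\theta)$, where Fubini and the boundary terms are unambiguous, and then extend the identity to the strip $0<\Re\lambda<\min\{\Lambda,\hat{\lambda}_2\}$ by analyticity of both sides. With these repairs (and the $s\le0$ cases added), your argument goes through under the hypotheses of Theorem~\ref{th:main}.
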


\begin{proof}
We argue by contradiction by assuming that \eqref{TWS} admits a positive solution $(\phi,\psi)$ satisfying \eqref{BC} and for some wave speed $s_0<s^*$. From the continuity and positivity of $\phi$ together with~\eqref{BC}, there is $\e>0$ such that $\phi\ge\e>0$ in $\R$. Then the $\psi$-equation of \eqref{TWS} gives that
\beaa
-s_0\psi'(z)\ge d\bN_2[\psi](z)+b\psi(z)[1-\psi(z)/\e],\; z\in\bR.
\eeaa
Hence the $C^1(\R\times\R)$ function $V(x,t):=\psi(x-s_0t)$ satisfies $L[V](x,t)\ge 0$, where
\begin{equation}\label{au-2}
L[V](x,t):=V_t(x,t)-\big\{d\bN_2[V(\cdot,t)](x)+bV(x,t)[1-V(x,t)/\e]\big\},\;x\in\bR,\ t>0,
\end{equation}
and $V(x,0)=\psi (x)\not\equiv 0$ for $x\in\bR$.

Next, since $\hat{s}:=(s_0+s^*)/2<s^*$ and $V(\cdot,0)\ge0$ with $\liminf_{x\to-\infty}V(x,0)>0$, it follows from the comparison principle and the spreading dynamics for the nonlocal logistic equation $L[v]=0$ (cf. \cite{jz}) that
\beaa
\liminf_{ t\to \infty}\psi (\hat{s}t-s_0t)=\liminf_{t\to \infty}V\left(\hat{s}t,t\right)\ge\e>0.
\eeaa
However, $z:=\hat{s}t-s_0t=(s^*-s_0)t/2\to \infty$ as $t\to\infty$, a contradiction to $\psi(\infty)=0$. Thus the theorem is proved.
\end{proof}


\end{document}